\newcommand{\point}{\ensuremath{\xymatrix{A\ar@<+.6ex>[r]^(.5){\alpha}
&B\ar@<+.6ex>[l]^(.5){\beta}}}}
\newcommand{\rg}{\ensuremath{\xymatrix{A\ar@<+1ex>[r]^{\alpha}\ar@<-1ex>[r]_{\gamma}&B\ar[l]|{\beta}}}}
\newtheorem{Theorem}{Theorem}[section]
\newtheorem{Proposition}[Theorem]{Proposition}
\newtheorem{Corollary}[Theorem]{Corollary}
\newtheorem{Remark}[Theorem]{Remark}
\newtheorem{Example}[Theorem]{Example}
\newcommand{\cC}{{\mathcal C}}
\newcommand{\cF}{{\mathcal F}}
\newcommand{\cK}{{\mathcal K}}
\newcommand{\cN}{{\mathcal N}}
\newcommand\Pt{\mathbf{Pt}}
\newcommand\K{\mathcal{K}}
\newcommand\coker{\mathrm{coker}}
\begin{document}

\def \tm{\!\times\!}

\newenvironment{changemargin}[2]{\begin{list}{}{
\setlength{\topsep}{0pt}
\setlength{\leftmargin}{0pt}
\setlength{\rightmargin}{0pt}
\setlength{\listparindent}{\parindent}
\setlength{\itemindent}{\parindent}
\setlength{\parsep}{0pt plus 1pt}
\addtolength{\leftmargin}{#1}\addtolength{\rightmargin}{#2}
}\item}{\end{list}}

\title{A note on strong protomodularity, \\
actions and quotients}
\author{Giuseppe Metere}

\maketitle

\begin{abstract}

  In order to study the problems of extending an action along a quotient of the acted
  object and along a quotient of the acting object,
  we investigate some properties of the fibration of points.
  In fact, we obtain a characterization of protomodular categories among quasi-pointed
  regular ones, and, in the semi-abelian case, a characterization of strong protomodular
  categories.
  Eventually, we return to the initial questions by  stating the results
  in terms of internal actions.
\end{abstract}

\section{Introduction}
The present work originates from the investigation of the categorical properties related
to two well-known features of group actions.

\medskip\noindent
{\bf Actions on quotients}

\noindent Suppose we are given  a pair $(\xi,g)$:
$$
\xymatrix{A\times Y\ar@{-->}[r]^{\xi}&Y\ar[r]^g&Z \,,}
$$
where $\xi$ is a left-action of groups, and $g$ is a surjective homomorphism. We discuss the following problem:
under what conditions does the action $\xi$ induces an action on the quotient $Z$?

Indeed, it is not difficult to see that  $\xi$ is well-defined on the cosets of $Y$ \emph{mod} $X=\mathrm{Ker}(g)$,
precisely when it is well-defined on the $0$-coset $X$, i.e.\ when it restricts to $X$.
We shall state this property as follows:
\begin{itemize}
\item[(KC)] An action passes to the quotient if, and only if, it restricts to the kernel.
\end{itemize}

\medskip\noindent
\noindent {\bf Action of quotients}

Suppose now that we are given a group action $\xi$ as before, and a surjective group homomorphism
$q\colon A\to Q$.
A natural question arises: when does the given $A$-action induce a $Q$-action?
In this case, the restriction of the action $\xi$ to the kernel $K$ of $q$ always exists, and the condition
under which the action of the quotient is well defined, amounts to the fact that the
kernel of $q$ acts trivially.

\medskip
\noindent
These issues can be addressed in any category where a notion of internal object action
is available, e.g.\ in any semi-abelian category (see \cite{JMT02}).
Indeed, we will show that the property (KC) characterizes strongly protomodular
categories among semi-abelian categories, and that, in such contexts,
actions of quotients behave substantially in the same way as in the case of groups.

On the other hand these issues can be dealt with also in more general contexts.
Indeed, when an object $A$ acts on object $X$, just like in the case of group, one
can consider the split epimorphism $X\rtimes A\to A$ given by the  semidirect product
projection together with its canonical section. Vice-versa, any split epimorphism
with codomain $A$ gives rise to the \emph{conjugation} $A$-action on the kernel of the split epimorphism.

This allows to formulate our issues in terms of split epimorphisms, or \emph{points},
even in contexts where the machinery of internal actions is not at all available.
This line of investigation will lead us to the study of some new classifying
aspects of the fibration of points. In particular, with Proposition \ref{prop:Ker_reflects_sec},
we will give a characterization of protomodular categories among quasi-pointed regular ones
as those with kernel functors that reflect short exact sequences. Then, we will show that the problem of
extending actions along quotients translates (in term of points) in a property
closely connected with strong protomodularity, i.e.\ the fact that kernel functors reflect kernels In fact,
this property coincides with strong protomodularity in the semi-abelian case (Proposition \ref{prop:strongly}). On the other hand, the property of extending
an action along a quotient of the acting object has a counterpart in terms of points
in a property of change of base functors, as  described in Proposition \ref{prop:trivial_pointed_pass_to_Q}.
This observation eventually provides an exhaustive description of change of base functors
of the fibration of points along a regular epimorphism.

Our work confirms that strongly protomodular categories are a convenient setting for
working with internal actions, and related constructions.
Indeed, in the (strongly semi-abelian) varietal case, not only internal actions can be
described externally, i.e.\ with suitable set-theoretical maps, but also, they behave
\emph{nicely} with respect to quotients. This fact allows to apply varietal
techniques to the intrinsic setting.

Many  varieties of  universal algebra are strongly protomodular:
the categories of groups, Lie algebras, rings and, more generally, all distributive
$\Omega_2$-groups, i.e.\  distributive $\Omega$-groups with only unary and binary
operations (see \cite{MM10b}), as for instance the categories  of interest in the
sense of G.\ Orzech \cite{Orz72}.

\medskip
The paper is organized as follows.

In the next section we recall the basic notions
and fix the notation.

The third and the fourth sections are quite independent to each other.

Section three is devoted to the study of the exactness properties of kernel functors.
We prove that in quasi-pointed regular categories,
protomodularity is equivalent to the fact that kernel functors reflect short exact
sequences. Then we give a characterization of strongly
semi-abelian categories among semi-abelian ones (Theorem \ref{thm}).

In the fourth section the context is assumed to be strongly semi-abelian.
Here we approach the problem of determining the conditions that make
it possible to factor the change of base functor  of the
fibration of points along a regular epimorphism as an equivalence of categories followed by
a full embedding.

Actions on quotients and actions of quotients are treated explicitly in section
five, where the results obtained in the previous sections
 are reconsidered in terms of internal object actions.

\section{Preliminaries}\label{sec:preliminaries}
Here we recall some basic notions from \cite{BB}, and fix the notation.
\subsection{Protomodularity}
Let $\cC$ be a category with finite limits. We denote by $\Pt(\cC)$ the category with objects the four-tuples
$(B,A,b,s_b)$ in $\cC$, with $b\colon B\to A$ and $b\cdot s_b=1_A$, and with morphisms
$(f,g)\colon (D,C,d,s_d)\to (B,A,b,s_b)$:
\begin{equation}\label{diag:split_epi_morphism}
\begin{aligned}\xymatrix{
D\ar@<-.5ex>[d]_{d}\ar[r]^{f}
&B\ar@<-.5ex>[d]_{b}
\\
C\ar@<-.5ex>[u]_{s_d}\ar[r]_g
&A\ar@<-.5ex>[u]_{s_b}}
\end{aligned}
\end{equation}
such that both the upward and the downward directed squares commute.
The codomain assignment $(B,A,b,s_b)\mapsto A$ gives rise to a fibration, the so called
\emph{fibration of points}:
$$
\cF\colon \Pt(\cC)\to\cC.
$$
For an object $A$ of $\cC$, we denote by $\Pt_A(\cC)$ the fiber of $\cF$ over $A$.
Cartesian morphism  are given by  commutative diagrams (\ref{diag:split_epi_morphism})
with the downward directed square a pullback.
This way, any morphism $g\colon C\to A$ defines a ``change of base'' functor
$g^*\colon \Pt_A(\cC)\to\Pt_C(\cC)$.

If the category $\cC$ is  finitely complete, also the fibers $\Pt_A(\cC)$ are, and every change of base
functor is left exact. In the present work, $\cC$ will be always  finitely complete.

A category $\cC$ is called \emph{protomodular} when every change of base
of the fibration of points is conservative, i.e.\ when it reflects  isomorphisms (see \cite{BB}).

When  $\cC$  admits an initial object $0$, for any object $A$ of $\cC$, one can consider
the change of base along the initial arrow $!_A\colon 0\to A$. This defines a \emph{kernel functor} $\K_A$, for every object $A$.
In the presence of an initial object, the protomodularity condition can be simplified
by requiring that just kernel functors are conservative.

The category $\cC$ is called quasi-pointed when the unique arrow $0\to 1$ is a monomorphism.
Considering this being the case, the domain functor $\Pt_0(\cC)\to \cC$  defines an embedding of categories.
Its isomorphic image is the subcategory $\cC_0$ spanned by objects with null support (i.e.\ objects $A$ equipped with a necessarily unique arrow $\omega_A\colon A\to 0$) so
that we  can factor
$$
\K_A\colon \Pt_A(\cC)\to\cC_0\hookrightarrow\cC\,.
$$

When $0\to 1$ is an isomorphism, we say that $\cC$ is pointed; if this is the case, clearly $\cC_0=\cC$.

\smallskip
Let $\cC$ be a quasi-pointed finitely complete category. We shall call
\emph{kernel map} any $f\colon X\to Y$, pullback of an initial arrow, i.e.\ when $f$ fits into a pullback diagram
as it is shown below:
\begin{equation}\label{diag:kernel}
\begin{aligned}
\xymatrix{
X\ar[r]^{\omega_X}\ar[d]_{f}
&0\ar[d]^{!_Z}
\\
Y\ar[r]_{g}
&Z
}
\end{aligned}
\end{equation}
In this case, we write $f=\mathrm{ker}(g)$ or $f=k_g$. We denote by $\mathcal K$ the class of
kernel maps of a  given category $\cC$.

Following \cite{Bo91}, we say that $g$ is the cokernel of $f$, and we write $g=\coker(f)$, when
$(\ref{diag:kernel})$ is a pushout. Let us notice that this definition of cokernel is not dual to that
of kernel given above, unless the category is pointed.
When both conditions above are satisfied, i.e.\ when $(\ref{diag:kernel})$ is at the same time both a pullback and a pushout,
we call the pair $(f,g)$ \emph{short exact sequence} (see \cite{Bo91}), and we describe it by the diagram:
$$
\xymatrix{X\ar[r]^f&Y\ar[r]^g&Z}\,
$$
We recall from \cite{Bo01} that, if $\cC$ is quasi-pointed and protomodular, every regular epimorphism is the cokernel of its kernel,
so that the pair $(f,g)$ is a short exact sequence precisely when $g$ is a
regular epimorphism, and $f$ is its kernel.

\smallskip
Recall that an (internal) equivalence relation is called \emph{effective} when it is
the kernel pair of a map. A category $\cC$ is \emph{regular}, if it is finitely
complete, it has pullback-stable regular epimorphisms, and all effective
equivalence relations admit coequalizers.
A regular category $\cC$ is \emph{Barr exact} when all equivalence relations are
effective (see \cite{Ba71}).

\smallskip
Quasi-pointed protomodular regular categories are called
\emph{sequentiable}. If they are in fact pointed, they are called \emph{homological},
and they are termed \emph{semi-abelian} when they are also Barr exact and with finite
coproducts (see \cite{BB}).

An important feature of sequentiable categories is that, in such contexts, intrinsic
versions of some classical lemmas of homological algebra hold.
This is the case of the $3\times 3$ lemma (see \cite{Bo01}), that will be a
basic tool in the development of the present work.

\subsection{Strong protomodularity}\label{subsec:strong_protomodularity}

In \cite{Bo00b}, Bourn introduces a more general notion of \emph{normal monomorphism} that objectifies an equivalence class of an internal equivalence relations.

In a category $\cC$ with finite limits, a morphism $f\colon X\to Y$ is \emph{normal}
to an equivalence relation $(R,r_1,r_2)$ on the object $X$ when the following
two diagrams are pullbacks:
$$
\xymatrix{\ar@{}[dr]|(.3){\lrcorner}
X\times X\ar[r]\ar@{=}[d]
&R\ar[d]^-{\langle r_1,r_2\rangle}
\\
X\times X \ar[r]_(.45){f\times f}
&Y \times Y
}
\qquad
\xymatrix{\ar@{}[dr]|(.3){\lrcorner}
X\times X\ar[d]_-{p_1}\ar[r]
&R\ar[d]^-{r_1}
\\
X\ar[r]_-{f}
&Y
}
$$
When the category $\cC$ is protomodular, normality becomes a property: if $f$ is
normal to a relation $R$, then $R$ is unique.
We denote by $\mathcal N$ the class of  normal monomorphisms.

Indeed, in quasi-pointed protomodular categories, any kernel is normal to its
associated kernel relation. On the other hand,
not every normal monomorphism is a kernel, i.e.\ $\mathcal{K}\subseteq \mathcal{N}$,
and the inclusion may be  strict, in general.

Let us recall from \cite{Bo00b} that if $\cC$ is finitely complete, pointed and protomodular,
then the class $\cK$ coincides with the class $\cN$ precisely when every equivalence
relation is effective.

\smallskip
In \cite{Bo01} Bourn calls \emph{normal}, a left exact functor that is conservative and
reflects normal monomorphisms. A relevant application of this definition is related to
the fibration of points. When all the change of base functors
are normal, the category is called  \emph{strongly-protomodular} (see \cite{Bo00, BB}).
In the presence of initial object, it suffices to consider  the kernel functors
$\K_A$, for every object $A$. A strongly protomodular semi-abelian category is termed
\emph{strongly semi-abelian}.

Bourn, in \cite{Bo00}, gives a characterization of normal subobjects in $\Pt_A(\cC)$.
When $\cC$ is quasi-pointed protomodular, $\varphi\colon (B,b,s_b)\to (C,c,s_c)$
is normal in $\Pt_A(\cC)$ if, and only if, $\varphi\cdot k_b$ is normal in $\cC$, where
$X=\mathrm{Ker}(b)$ and $Y=\mathrm{Ker}(c)$:
$$
\xymatrix{
X\ar[r]^{k_b}\ar[d]_{f}
&B
\ar@<+.5ex>[r]^{b}\ar[d]^{\varphi}
&A\ar@<+.5ex>[l]^{s_b}\ar@{=}[d]
\\
Y\ar[r]_{k_c}
&C
\ar@<+.5ex>[r]^{c}
&A\ar@<+.5ex>[l]^{s_c}
}
$$
This, in turns, gives a criterion for strong protomodularity: it suffices to check, for every
morphism of split short exact sequences as above, that if $f$ is normal, then also $k_c\cdot f$ is.

\section{Exactness properties of kernel functors}\label{sec:exactness properties of kernel functors}
In this section, we analyze some issues related to the behavior of kernel functors with
respect to kernels, cokernels and short exact sequences, in the quasi-pointed regular
setting. As recalled before, in this case, the kernel functor takes values in the base category $\cC$. Moreover, when
$\cC$ is protomodular, regular (or Barr-exact), then also $\Pt_A(\cC)$ is protomodular, regular (or Barr-exact)
respectively (see   \cite{BB}, for instance). These circumstances suggest to investigate
the exactness properties of kernel functors, in the sense of  homological algebra.

Our main motivation rests in the observation that a notion similar to strong protomodularity, but stated in terms of kernels instead of normal monomorphisms, is connected with (actually equivalent to) the problem of extending actions along quotients. This connection will be made explicit in the next sections.

\smallskip

The preservation property described in the next proposition is little  more than a reformulation of
some arguments analyzed in \cite{Bo01}.
\begin{Proposition}\label{prop:Ker_preserves_sec}
Let $\cC$ be a quasi-pointed protomodular category with pullback stable regular epimorphisms.
Then  $\K_A$ preserves short exact sequences, for every $A$ in $\cC$.
\end{Proposition}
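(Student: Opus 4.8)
The plan is to break the statement into the two conditions defining short exactness and verify each separately. Fix a short exact sequence
\[
(B,b,s_b)\xrightarrow{\ \varphi\ }(C,c,s_c)\xrightarrow{\ \psi\ }(E,e,s_e)
\]
in the fiber $\Pt_A(\cC)$, which is pointed and regular; thus $\psi$ is a regular epimorphism and $\varphi=\mathrm{ker}(\psi)$ in $\Pt_A(\cC)$. Denote by $X,Y,W$ the kernels of $b,c,e$, by $k_b,k_c,k_e$ the corresponding kernel inclusions, and set $f=\K_A(\varphi)$ and $h=\K_A(\psi)$. The goal is to prove that $X\xrightarrow{f}Y\xrightarrow{h}W$ is short exact in $\cC$.

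Preservation of the kernel part is immediate. As recalled in the preliminaries, $\K_A$ is the change-of-base functor along $0\to A$, hence left exact, so in particular it preserves kernels. Since $\varphi=\mathrm{ker}(\psi)$ in $\Pt_A(\cC)$, applying $\K_A$ yields $f=\mathrm{ker}(h)$ in $\cC$.

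The real content is that $h$ is a regular epimorphism. First I would identify $h$ as a pullback of $\psi$. Indeed $Y=\mathrm{Ker}(c)=\mathrm{Ker}(e\psi)$ and $W=\mathrm{Ker}(e)$, so by the pullback pasting lemma the square defining $\mathrm{ker}(e\psi)$ decomposes as the square defining $k_e$ followed by the pullback of $\psi$ along $k_e$; this gives $Y\cong C\times_E W$ and exhibits $h$ as the pullback of $\psi$ along the monomorphism $k_e$. Since regular epimorphisms are stable under pullback in a regular category, it now suffices to know that $\psi\colon C\to E$ is a regular epimorphism in $\cC$.

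The one genuinely delicate step, and the place where the properties of short exact sequences from \cite{Bo01} enter, is therefore the passage from $\psi$ being a regular epimorphism in the fiber to $\psi$ being one in $\cC$, i.e.\ the fact that the domain functor $\Pt_A(\cC)\to\cC$ preserves regular epimorphisms. The argument I have in mind uses that this functor preserves limits, so it carries the kernel pair of $\psi$ in $\Pt_A(\cC)$ to the kernel pair of $\psi$ in $\cC$; forming the coequalizer of the latter in $\cC$ produces a point over $A$ together with a comparison which, measured against the coequalizer $\psi$ in $\Pt_A(\cC)$, turns out to be an isomorphism, whence $\psi$ is a regular epimorphism in $\cC$. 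Combining this with the kernel part, $f=\mathrm{ker}(h)$ together with $h$ a regular epimorphism give the asserted short exact sequence.
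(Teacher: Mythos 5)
Your proof is correct, but on the decisive step it takes a genuinely different route from the paper's. The kernel half is the same in both arguments: $f=\mathrm{ker}(h)$ follows from left exactness of $\K_A$. For the regular-epimorphism half, the paper simply cites Bourn (Proposition 8 of \cite{Bo01}) for the interaction of kernel functors with regular epimorphisms, whereas you prove what is needed directly: first that a regular epimorphism $\psi$ of $\Pt_A(\cC)$ has regular epimorphic underlying map in $\cC$ (form in $\cC$ the coequalizer of its kernel pair, lift it to the fiber, and compare it with $\psi$), and then that $h=\K_A(\psi)$ is the pullback of $\psi$ along the monomorphism $k_e$, hence a regular epimorphism by pullback stability. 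Your route is longer but more elementary and self-contained: protomodularity is never used, so it in fact establishes preservation of short exact sequences by $\K_A$ in any pointed regular category --- a nice complement to Proposition \ref{prop:Ker_reflects_sec}, where it is the \emph{reflection} of short exact sequences that characterizes protomodularity; the paper's argument, both through the citation and through its reading of short exactness in $\Pt_A(\cC)$ as a pullback-plus-pushout square (i.e.\ a kernel--cokernel pair), is tied to the homological setting. Two minor repairs to your write-up: the domain functor $\Pt_A(\cC)\to\cC$ does not preserve all limits (it sends the terminal object $(A,1_A,1_A)$ to $A$), but it does preserve pullbacks, which is all you need for kernel pairs; and to conclude that your comparison map is invertible you should say explicitly that $\psi$, having a kernel pair in the finitely complete fiber, is the coequalizer of that kernel pair in $\Pt_A(\cC)$, while the coequalizer computed in $\cC$ inherits a point structure over $A$ making it a coequalizer of the same pair in the fiber; the two universal properties then force the comparison to be an isomorphism.
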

\begin{proof}
Let us consider a short exact sequence $(\varphi,\gamma)$, and let
$f=\K_A(\varphi)$ and $g=\K_A(\gamma)$, as described by the following diagram.
\begin{equation}\label{diag:K*C*}
\begin{aligned}
\xymatrix{
\K_A(b)\ar[r]^{k_b}\ar[d]_{f}
&B
\ar@<+.5ex>[r]^{b}\ar[d]_{\varphi}
&A\ar@<+.5ex>[l]^{s_b}\ar@{=}[d]
\\
\K_A(c)\ar[r]_{k_c}\ar[d]_{g}
&C
\ar@<+.5ex>[r]^{c}\ar[d]_{\gamma}
&A\ar@<+.5ex>[l]^{s_c}\ar@{=}[d]
\\
\K_A(d)\ar[r]_{k_d}
&D\ar@<+.5ex>[r]^{d}
&A\ar@<+.5ex>[l]^{s_d}
}
\end{aligned}
\end{equation}
Since kernel functors preserve limits, $\ker(g)=f$. Furthermore, by Lemma 1 in \cite{Bo01},
the left-down square is a pullback, and since $\gamma$ is a regular epimorphism, so is $g$.
Finally, by Proposition 2 in  \cite{Bo01}, $g=\coker(f)$.
\end{proof}

\smallskip
Before we can treat  reflection properties of kernel functors, let us develop
the necessary description of kernels in $\Pt_A(\cC)$.

We have just recalled Bourn's characterization of normal subobject in $\Pt_A(\cC)$.
In the sequentiable setting, one can recover a similar characterization for kernels.

\begin{Proposition}\label{prop:characterization_of_kernels}
In a sequentiable category $\cC$, let us consider a morphism of points
$\varphi\colon (B,b,s_b)\to (C,c,s_c)$, together with its restriction to kernels, as described
by the commutative diagram below:
\begin{equation}\label{diag:phi}
\begin{aligned}
\xymatrix{
X\ar[r]^{k_b}\ar[d]_{f}
&B
\ar@<+.5ex>[r]^{b}\ar[d]^{\varphi}
&A\ar@<+.5ex>[l]^{s_b}\ar@{=}[d]
\\
Y\ar[r]_{k_c}
&C
\ar@<+.5ex>[r]^{c}
&A\ar@<+.5ex>[l]^{s_c}
}
\end{aligned}
\end{equation}
Then
\begin{itemize}
\item[$(1)$] $\varphi$ is a kernel in $\Pt_A(\cC)$ if, and only if,
$\varphi\cdot k_b$ is a kernel in $\cC$;
\item[$(2)$] in this case, the cokernel of $\varphi$ in $\Pt_A(\cC)$ is given by
the cokernel of $\varphi\cdot k_b$ in $\cC$.
\end{itemize}
\end{Proposition}
\begin{proof}
Point $(1)$.
The fact that $\varphi$ is a kernel, amounts to the existence of a morphism of points
$\gamma\colon (C,c,s_c)\to (D,d,s_d) $, such that the commutative  square $\gamma\cdot \varphi=s_d\cdot b$
is a pullback in $\cC$. Then, pasting it with the kernel diagram of $(k_b,b)$, one easily sees that $k_c\cdot f=\varphi\cdot k_b=\mathrm{ker}(\gamma)$ in $\cC$.

Conversely, let us assume that $k_c\cdot f=\varphi\cdot k_b$ is a kernel in $\cC$,
and let $\gamma\colon C\to D$ be its cokernel (always in $\cC$). Then $\gamma$ underlies a morphism of points.
Indeed, since $c\cdot\varphi\cdot k_b$ factors through $0$, we get a unique
$d\colon D\to A$ such that $d\cdot \gamma = c$. In fact, $d$ is a split epimorphism
with section $s_d=\gamma\cdot s_c$, and $\gamma\colon (C,c,s_c)\to (D,d,s_d)$ is a
morphism of points. We are to prove that $\varphi$ is the kernel of $\gamma$ in $\Pt_A(\cC)$.
To this end, let us consider the commutative diagram
$$
\xymatrix{
X\ar[r]^{k_b}\ar[d]
&B\ar[r]^{\varphi}\ar@<+.5ex>^{b}[d]
&C\ar[d]^{\gamma}
\\
0\ar[r]
&A\ar@<+.5ex>[u]^{s_b}\ar[r]_{s_d}
&D}
$$
The whole diagram and the square on the left are pullbacks, so that by  the pullback
cancelation property of protomodular categories (see \cite{Bo91}) also the square on
the right is a pullback, thus showing that $\varphi$ is the kernel of $\gamma$ in
$\Pt_A(\cC)$.

\smallskip
Point $(2)$.
Clearly $\Pt_A(\cC)$ is pointed, moreover it is protomodular and regular (see \cite{BB}) as $\cC$ is.
Actually, as showed in the proof of point 1, $\gamma=\coker(\varphi\cdot k_c)$ in $\cC$ underlies
a regular epimorphism. In order to conclude the proof, it suffices to recall that in
 homological categories a regular epimorphism  is always the cokernel of its kernel.
\end{proof}

\smallskip
We are now ready to show how, in the sequentiable setting, kernel functors also reflect short exact sequences.
Moreover, this property characterizes sequentiable categories among quasi-pointed
regular ones.

\begin{Proposition}\label{prop:Ker_reflects_sec}
Let $\cC$ be a quasi-pointed regular category. The following statements are equivalent:
\begin{itemize}
\item[$(1)$] $\cC$ is protomodular,
\item[$(2)$] $\K_A$ reflects short exact sequences, for every $A$ in $\cC$.
\end{itemize}
\end{Proposition}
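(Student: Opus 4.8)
The plan is to prove the two implications separately, using throughout that $\K_A$, being a right adjoint, is left exact and so preserves all finite limits (kernels, kernel pairs and equalizers), and that the fibre $\Pt_A(\cC)$ is regular whenever $\cC$ is. For $(1)\Rightarrow(2)$ assume $\cC$ is protomodular, hence homological. Let $(\varphi,\gamma)$ be a null sequence in $\Pt_A(\cC)$ whose image $(f,g)=(\K_A\varphi,\K_A\gamma)$ is short exact in $\cC$. Since $g$ is a regular epimorphism and $\K_A$ reflects regular epimorphisms (Proposition~8 of \cite{Bo01}, already invoked in the proof of Proposition~\ref{prop:Ker_preserves_sec}), $\gamma$ is a regular epimorphism. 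To identify $\varphi$ with $\mathrm{ker}(\gamma)$, note that $\gamma\cdot\varphi=0$ factors $\varphi$ as $\varphi=\mathrm{ker}(\gamma)\cdot\iota$; applying the limit-preserving functor $\K_A$, both $\K_A\varphi=f$ and $\K_A(\mathrm{ker}\,\gamma)=\mathrm{ker}(g)$ are the kernel of $g$, so $\K_A\iota$ is an isomorphism. Protomodularity says exactly that $\K_A$ reflects isomorphisms, whence $\iota$ is invertible and $\varphi=\mathrm{ker}(\gamma)$, so $(\varphi,\gamma)$ is short exact. (Equivalently one may route this through Lemmas~\ref{lemma:characterization_of_kernels} and~\ref{lemma:cokernel}.)

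For $(2)\Rightarrow(1)$ the goal is to show each $\K_A$ reflects isomorphisms. I first record an auxiliary step: if $m$ is a monomorphism in $\Pt_A(\cC)$ with $\K_A m$ an isomorphism, then $m$ is an isomorphism. Indeed, $m$ monic forces $\mathrm{ker}(m)=0$, so the null sequence $(0\to\mathrm{dom}(m),\,m)$ is carried by $\K_A$ to $(\mathrm{ker}(\K_A m),\K_A m)$, which is short exact because $\K_A m$ is invertible; reflecting it shows that $m$ is a regular epimorphism, and a monomorphism that is a regular epimorphism in a regular category is invertible.

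Now let $\varphi$ be any morphism of $\Pt_A(\cC)$ with $f=\K_A\varphi$ an isomorphism. Setting $\kappa=\mathrm{ker}(\varphi)$, the image $(\K_A\kappa,f)=(\mathrm{ker}(f),f)$ is short exact, and reflecting it shows $\varphi$ is a regular epimorphism. For monicity, consider the kernel pair $(R,p_1,p_2)$ of $\varphi$ in $\Pt_A(\cC)$; since $\K_A$ preserves kernel pairs and $f$ is invertible, $\K_A p_1=\K_A p_2$. The equalizer $e\colon E\to R$ of $p_1$ and $p_2$ is a monomorphism, and, $\K_A$ being left exact, $\K_A e$ is an isomorphism; the auxiliary step then gives that $e$ is invertible, so $p_1=p_2$ and $\varphi$ is a monomorphism. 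Being also a regular epimorphism, $\varphi$ is an isomorphism. Thus $\K_A$ reflects isomorphisms for every $A$, i.e.\ $\cC$ is protomodular.

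I expect the delicate point to be the monicity half of $(2)\Rightarrow(1)$. The naive route — deducing from $\K_A\varphi$ monic that $\mathrm{ker}(\varphi)=0$ and hoping this forces $\varphi$ monic — is invalid in a merely regular category, where a trivial kernel does not entail monicity (precisely the failure witnessed by non-protomodular varieties such as commutative monoids). The kernel-pair/equalizer detour is exactly what turns the reflection hypothesis into genuine monicity, and it is the step I would write out most carefully.
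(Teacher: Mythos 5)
Your proof is correct, but it follows a genuinely different route from the paper's in both directions. For $(1)\Rightarrow(2)$, the paper never factors $\varphi$ through $\mathrm{ker}(\gamma)$: it first derives the equality $\gamma\cdot\varphi=s_d\cdot b$, then applies the $3\times 3$ lemma to diagram (\ref{diag:3x3}), deduces via the pullback cancellation property of protomodular categories that $\varphi$ is a kernel in $\Pt_A(\cC)$, and concludes with Lemma \ref{lemma:cokernel}; your argument (reflection of regular epimorphisms makes $\gamma$ a regular epimorphism, and the comparison $\iota$ into $\mathrm{ker}(\gamma)$ is inverted by protomodularity) is shorter and more self-contained, at the price of resting on Proposition 8 of \cite{Bo01}, the same external result the paper quotes in the proof of Proposition \ref{prop:Ker_preserves_sec}. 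For $(2)\Rightarrow(1)$ the contrast is sharper: the paper applies the reflection hypothesis exactly once, to the sequence consisting of $\varphi$ followed by $c\colon(C,c,s_c)\to(A,1_A,1_A)$, the map to the zero object of the fiber; its kernel restriction $(f,\,Y\to 0)$ is short exact precisely because $f$ is an isomorphism, and reflecting it exhibits $\varphi$ as the kernel of $c$ in $\Pt_A(\cC)$, i.e.\ as the pullback of $1_A$ along $c$, hence an isomorphism outright. So the ``delicate monicity half'' you single out never arises in the paper's proof; your kernel-pair/equalizer detour together with the auxiliary step is a correct substitute for it, bought with two applications of the hypothesis instead of one. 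Two caveats you should address. First, you prove that $\K_A$ reflects short exactness only of \emph{null} sequences of $\Pt_A(\cC)$; this suffices for your converse direction (every sequence you reflect there is null), but the paper proves the unrestricted statement, deriving nullity from $g\cdot f=0$ by precomposing with the jointly epic pair $(k_b,s_b)$ --- a one-line addition you need if condition $(2)$ is read as applying to arbitrary composable pairs of the fiber. Second, in a mere pointed regular category $\K_A$ need not be a right adjoint (the left adjoint $\Sigma_A$ requires pushouts of split monomorphisms); its left exactness holds nonetheless, as recalled in Section \ref{sec:preliminaries}, because change-of-base functors of the fibration of points are left exact.
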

\begin{proof}
In a protomodular category $\cC$, let us consider a pair $(\varphi,\gamma)$ of morphisms of points over $A$, such that
applying the kernel functor $\cK_A$ one obtains a short exact sequence $(f,g)$, see diagram (\ref{diag:K*C*}).
Since $g\cdot f=0$, $\gamma\cdot \varphi$ factors through $A$. More precisely,
 $\gamma\cdot \varphi=s_d\cdot b$, as  one can prove by pre-composing this equality with the jointly  epic pair $(k_b,s_b)$.
Then we consider the diagram below:
\begin{equation}\label{diag:3x3}
\begin{aligned}
\xymatrix{
\K_A(b)\ar@{=}[r]\ar[d]_{f}
&\K_A(b)
\ar[r]\ar[d]^{\varphi\cdot k_b}
&0\ar[d]
\\
\K_A(c)\ar[r]_{k_c}\ar[d]_{g}
&C
\ar@<+.5ex>[r]^{c}\ar[d]_{\gamma}
&A\ar@<+.5ex>[l]^{s_c}\ar@{=}[d]
\\
\K_A(d)\ar[r]_{k_d}
&D\ar@<+.5ex>[r]^{d}
&A\ar@<+.5ex>[l]^{s_d}
}
\end{aligned}
\end{equation}
We can apply the $3\times 3$ lemma: the three rows are short exact, and so are the
leftmost and the rightmost columns. The middle column is zero, hence we can conclude
that it is short exact. By Proposition \ref{prop:characterization_of_kernels},
the pair $(\varphi,\gamma)$ is a short exact sequence in $\Pt_A(\cC)$.

Conversely, we have to prove that, for any object $A$, the kernel functor $\K_A$ reflects isomorphisms.
To this end, we consider a map $\varphi$ in $\Pt_A(\cC)$ such that its restriction to kernels
is an isomorphism $f$. Then, since kernels have null support, the cokernel of $f$ exists, and
of course it is trivial. Thus one can consider the following diagram:
$$
\xymatrix{
\K_A(b)\ar[r]^{k_b}\ar[d]_{f}^{\simeq}
&B
\ar@<+.5ex>[r]^{b}\ar[d]_{\varphi}
&A\ar@<+.5ex>[l]^{s_b}\ar@{=}[d]
\\
\K_A(c)\ar[r]_{k_c}\ar[d]
&C
\ar@<+.5ex>[r]^{c}\ar[d]_{c}
&A\ar@<+.5ex>[l]^{s_c}\ar@{=}[d]
\\
0\ar[r]
&A\ar@<+.5ex>[r]^{1_A}
&A\ar@<+.5ex>[l]^{1_A}
}
$$
By applying the hypothesis, we obtain that the sequence $(\varphi,c)$ is short exact in $\Pt_A(\cC)$, so that $\varphi$ is the pullback of  $1_A$ along $c$, hence an isomorphism.

\end{proof}
Proposition \ref{prop:Ker_preserves_sec} and Proposition \ref{prop:Ker_reflects_sec} together, imply immediately
the following corollary.
\begin{Corollary}
Let $\cC$ be sequentiable. Then for any map $e\colon E\to A$, the change of base
$e^*\colon\Pt_A(\cC)\to \Pt_E(\cC)$ preserves and reflects short exact sequences.
\end{Corollary}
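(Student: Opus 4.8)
The plan is to deduce the corollary directly from the two preceding propositions by factoring the change of base $e^{*}$ through the kernel functors, and exploiting the fact that the fibration of points is well-behaved with respect to change of base. First I would recall that for any $e\colon E\to A$, the change of base functor $e^{*}\colon\Pt_A(\cC)\to\Pt_E(\cC)$ is left exact (this is noted in the preliminaries, since $\cC$ is finitely complete), so it automatically preserves kernels and the zero object. The essential observation is that the kernel functors commute with change of base up to natural isomorphism: there is a canonical identification
\begin{equation*}
\K_E\cdot e^{*}\;\cong\;\K_A\,,
\end{equation*}
because pulling back a point in $\Pt_A(\cC)$ first along $e$ and then along $!_E\colon 0\to E$ amounts to pulling back along the composite $e\cdot{!_E}={!_A}\colon 0\to A$, by the usual pasting of pullbacks. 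This identity says that the restriction to kernels of $e^{*}(\varphi,\gamma)$ is canonically the same as the restriction to kernels of $(\varphi,\gamma)$ itself.

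With this in hand, the two halves of the statement follow formally. For preservation: suppose $(\varphi,\gamma)$ is short exact in $\Pt_A(\cC)$. By Proposition \ref{prop:Ker_preserves_sec}, its restriction to kernels $\K_A(\varphi,\gamma)=(f,g)$ is short exact in $\cC$. By the commutation $\K_E\cdot e^{*}\cong\K_A$, the restriction to kernels of $e^{*}(\varphi,\gamma)$ is again $(f,g)$, which is short exact; so by Proposition \ref{prop:Ker_reflects_sec} (the reflection direction, valid since $\cC$ is homological hence protomodular), $e^{*}(\varphi,\gamma)$ is short exact in $\Pt_E(\cC)$. For reflection: suppose $e^{*}(\varphi,\gamma)$ is short exact in $\Pt_E(\cC)$. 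Applying Proposition \ref{prop:Ker_preserves_sec} to $e^{*}(\varphi,\gamma)$ shows $\K_E\cdot e^{*}(\varphi,\gamma)\cong(f,g)$ is short exact in $\cC$; then Proposition \ref{prop:Ker_reflects_sec} applied over $A$ gives that $(\varphi,\gamma)$ is short exact in $\Pt_A(\cC)$. Thus preservation and reflection both reduce to the combination of the two propositions via the single identity $\K_E\cdot e^{*}\cong\K_A$.

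I expect the main obstacle to be purely bookkeeping rather than conceptual: one must verify carefully that $e^{*}(\varphi,\gamma)$ is genuinely a composable pair in $\Pt_E(\cC)$ and that the natural isomorphism $\K_E\cdot e^{*}\cong\K_A$ is compatible with this factorization, so that ``short exact sequence'' is transported coherently rather than merely objectwise. Since change of base is a functor between the fibers and each $\K_{(-)}$ is the kernel functor of the respective fiber, the coherence is the standard pseudofunctoriality of the fibration of points, and the pasting-of-pullbacks argument closes the gap. No new homological input beyond Propositions \ref{prop:Ker_preserves_sec} and \ref{prop:Ker_reflects_sec} is needed; the corollary is, as the text states, an immediate consequence of the two results taken together.
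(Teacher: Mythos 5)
Your proof is correct and follows exactly the route the paper intends: the paper offers no written proof beyond declaring the corollary immediate from Propositions \ref{prop:Ker_preserves_sec} and \ref{prop:Ker_reflects_sec}, and your identification $\K_E\cdot e^{*}\cong\K_A$ (via $e\cdot{!_E}={!_A}$ and pasting of pullbacks) is precisely the observation that makes that immediacy work, with preservation and reflection then following formally as you describe.
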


\medskip
In the last part of  this section we would like to examine the behavior of the kernel functors with respect to kernels
and (some specific class of) cokernels.
We start by considering a distinguished class of morphisms of points, i.e.\ those maps
$\varphi$ in $\Pt_A(\cC)$ such that their restriction to the kernel functor $\K_A$
is a kernel map in $\cC$.

Of course, if  $\varphi$ is a kernel in $\Pt_A(\cC)$, then $\cK_A(\varphi)$ is
a kernel in $\cC$. On the other hand, we wish to investigate when the other implication holds.
This is done in the next proposition.

\begin{Proposition}\label{prop:proto_strongly}
Let $\cC$ be sequentiable, and let $\K_A\colon\Pt_A(\cC)\to\cC$ be the kernel functor relative to an object $A$ in $\cC$.
Then the following statements are equivalent:
\begin{itemize}
\item[$(1)$] the kernel functor $\K_A$ reflects kernel maps,
\item[$(2)$] the kernel functor $\K_A$ \emph{lifts} the cokernels of the maps $\varphi$
  such that $\K_A(\varphi)$ is a kernel, i.e.:
\item[$(\mathrm{C*})$] for every $\varphi$ in $\Pt_A(\cC)$ such that $\K_A(\varphi)=f$ is a kernel map in $\cC$, there exists a unique $\gamma$ in $\Pt_A(\cC)$
such that $\gamma=\mathrm{coker}(\varphi)$, and  $\K_A(\gamma)=\mathrm{coker}(f)$.
\end{itemize}
\end{Proposition}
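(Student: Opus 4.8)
The plan is to first dispose of the easy identifications and then concentrate on the single substantial equivalence. Statements $(1)$ and $(2)$ say literally the same thing: by the definition of a $\K_A$-kernel, the assertion that every $\K_A$-kernel is a kernel map in $\Pt_A(\cC)$ is exactly the assertion that $\K_A$ reflects kernel maps. Likewise $(3)$ is just a name for the explicit condition $(\mathrm{C*})$. Hence the proposition reduces to the equivalence $(1)\Leftrightarrow(\mathrm{C*})$, which I would establish as two implications. Throughout, the crucial leverage comes from the fact that, $\cC$ being homological, the functor $\K_A$ both preserves short exact sequences (Proposition~\ref{prop:Ker_preserves_sec}) and reflects them (Proposition~\ref{prop:Ker_reflects_sec}).

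For $(1)\Rightarrow(\mathrm{C*})$ I would take a map $\varphi$ of $\Pt_A(\cC)$ with $\K_A(\varphi)=f$ a kernel map, and arrange it in the layout of diagram~(\ref{diag:K*C*}). By $(1)$, $\varphi$ is already a kernel map in $\Pt_A(\cC)$. Since $\cC$ has cokernels, I would form the cokernel of $\varphi\cdot k_b$ in $\cC$ and promote its codomain to a point over $A$, exactly by the $3\times 3$-lemma construction used in the proof of Lemma~\ref{lemma:characterization_of_kernels}; Lemma~\ref{lemma:cokernel} then certifies that the resulting $\gamma$ is the cokernel of $\varphi$ computed in $\Pt_A(\cC)$. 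As $\varphi$ is a kernel map and $\gamma=\mathrm{coker}(\varphi)$ in the (homological) fibre, the pair $(\varphi,\gamma)$ is short exact in $\Pt_A(\cC)$, so Proposition~\ref{prop:Ker_preserves_sec} makes $(f,\K_A(\gamma))$ short exact in $\cC$; this is precisely $\K_A(\gamma)=\mathrm{coker}(f)$. Uniqueness of $\gamma$ is uniqueness of cokernels up to isomorphism.

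For $(\mathrm{C*})\Rightarrow(1)$ I would start with an arbitrary $\K_A$-kernel $\varphi$, so that $f=\K_A(\varphi)$ is a kernel map, and put $g=\mathrm{coker}(f)$; in the homological category $\cC$ the pair $(f,g)$ is then short exact, since $f$ is a kernel map and $g$ its cokernel. Condition $(\mathrm{C*})$ furnishes $\gamma$ in $\Pt_A(\cC)$ with $\K_A(\gamma)=g$, whence $(\K_A(\varphi),\K_A(\gamma))=(f,g)$ is short exact in $\cC$. Because $\cC$ is in particular protomodular, Proposition~\ref{prop:Ker_reflects_sec} lets me reflect this to a short exact sequence $(\varphi,\gamma)$ in $\Pt_A(\cC)$; in particular $\varphi=\ker(\gamma)$, so $\varphi$ is a kernel map and $(1)$ holds.

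The step I expect to be the main obstacle is the construction, in the first implication, of the point structure on the $\cC$-cokernel of $\varphi\cdot k_b$ and the verification that the resulting $\gamma$ really is the cokernel of $\varphi$ inside $\Pt_A(\cC)$. I would avoid redoing this from scratch by appealing directly to the $3\times 3$-lemma argument already recorded in the proof of Lemma~\ref{lemma:characterization_of_kernels}, together with the explicit identification furnished by Lemma~\ref{lemma:cokernel}. The remaining points — that a kernel map sits in a short exact sequence with its own cokernel, and the uniqueness clauses — are routine in a homological category.
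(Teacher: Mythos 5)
Your proof is correct, and it reaches the same two nontrivial implications as the paper, but with a different decomposition. Where the paper proves $(2)\Rightarrow(\mathrm{C*})$ and $(\mathrm{C*})\Rightarrow(2)$ by running the $3\times 3$ lemma directly on explicit diagrams (and, in the second direction, finishing with Lemma \ref{lemma:characterization_of_kernels}), you instead treat the proposition as a corollary of the already-established preservation and reflection results: for $(1)\Rightarrow(\mathrm{C*})$ you reuse the construction from the proof of Lemma \ref{lemma:characterization_of_kernels} to put a point structure on $\mathrm{coker}(\varphi\cdot k_b)$, then invoke Lemma \ref{lemma:cokernel} and Proposition \ref{prop:Ker_preserves_sec} — steps the paper leaves implicit after its $3\times 3$ computation; for $(\mathrm{C*})\Rightarrow(1)$ you bypass the $3\times 3$ argument entirely by reflecting the short exact sequence $(f,g)$ through Proposition \ref{prop:Ker_reflects_sec}, which immediately gives $\varphi=\mathrm{ker}(\gamma)$ in $\Pt_A(\cC)$. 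The underlying mathematics is the same (Proposition \ref{prop:Ker_reflects_sec} is itself proved by the $3\times 3$ lemma), so what your version buys is modularity and brevity, and it makes the logical dependence on the earlier results of the section transparent; what it silently uses, and what the paper's diagram-level proof avoids, are two small standard facts: that $\Pt_A(\cC)$ is itself pointed regular (indeed homological), so that a kernel map followed by its cokernel there is a short exact sequence in the sense needed for Proposition \ref{prop:Ker_preserves_sec}, and that in a homological category every kernel map admits a cokernel (via the regular epi--mono factorization of the map it is a kernel of), which justifies forming $\mathrm{coker}(f)$ and $\mathrm{coker}(\varphi\cdot k_b)$ without an extra cocompleteness hypothesis. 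Neither point is a gap — the paper relies on the same facts — but they are worth recording if your argument is to stand alone.
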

For the notion of (co)limit lifting functor, the reader can refer to Definition 13.17 of \cite{Ad90}.
\begin{proof}
$(1)\Rightarrow (2)$. Let us consider a morphism $\varphi\colon (B,b,s_b)\to (C,c,s_c)$ such that
$f=\cK_A(\varphi)$ is a kernel, and let $g=\coker(f)$. By $(1)$ $\varphi$ is a kernel in $\Pt_A(\cC)$,
hence by Proposition \ref{prop:characterization_of_kernels}, also $ k_c\cdot f$ is.

Let $(D,\gamma)$ be the cokernel of $k_c\cdot f$, so that the pair $(k_c\cdot f,\gamma)$ is
a short exact sequence. We can consider the following commutative diagram
$$
\xymatrix{
\cK_A(b)\ar@{=}[r]\ar[d]_{f}
&\cK_A(b)\ar[r]\ar[d]^{ k_c\cdot f}
&0\ar[d]
\\
\cK_A(c)\ar[r]^{k_c}\ar[d]_{g}
&C\ar@<+.5ex>[r]^c\ar[d]^{\gamma}
&A\ar@<+.5ex>[l]^{s_c}\ar@{=}[d]
\\
Z\ar[r]_{\alpha}
&D\ar[r]_{\beta}
&A
}
$$
where $\alpha$ and $\beta$ are obtained by the universal properties of the cokernels
$(D,\gamma)$ and $(Z,g)$.
Since $\cC$ is sequentiable, we can apply the $3\times 3$ lemma, and  conclude
that the sequence $(\alpha,\beta)$ is short exact. Moreover, $\beta$ is split by
$\gamma\cdot s_c$, so that $\gamma$ induces a morphism of points
$(C,c,s_c)\to(E,\beta, \gamma\cdot s_c)$; in fact, by Proposition \ref{prop:characterization_of_kernels},
$\gamma =\coker(\varphi)$ in $\Pt_A(\cC)$.

Now, universality of kernels implies the existence of a unique isomorphism
$\tau\colon Z\to \cK_A(\beta)$ such that $\alpha=k_{\beta}\cdot \tau$.
Of course, as $g$ is a cokernel of $f$ also $\tau\cdot g$ is, moreover
$\cK_A(\gamma)=\tau\cdot g$, so that the existence part of $(\mathrm{C*})$ is
granted. Uniqueness comes from the fact that, since $\cC$ is protomodular,
the pair $(k_c,s_c)$ is jointly strongly epic.

$(2)\Rightarrow (1)$. Let us assume that  $f=\cK_A(\varphi)$ is a kernel.
By $(C*)$ then, there is a $\gamma=\coker(\varphi)$ in $\Pt_A(\cC)$ such that
$\cK_A(\gamma)=\coker f$. Now apply Proposition \ref{prop:Ker_reflects_sec} and get
$(\varphi,\gamma)$ is short exact. In particular, $\varphi$ is a kernel.
\end{proof}

Whenever kernel maps and normal monomorphisms coincide, condition $(1)$
of Proposition \ref{prop:proto_strongly} above, expresses precisely the strong
protomodularity axiom. This proves the following proposition.

\begin{Proposition}\label{prop:strongly}\label{prop:C*}
Let $\cC$ be a semi-abelian category. The following statements are equivalent
\begin{itemize}
\item[$(1)$] $\cC$ is strongly semi-abelian,
\item[$(2)$] for every object $A$ of $\cC$, the kernel functor $\K_A$ lifts the cokernels of $\K_A$-kernels, i.e.\ condition
$(C*)$ of Proposition \ref{prop:proto_strongly} is satisfied.
\end{itemize}
\end{Proposition}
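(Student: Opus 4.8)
The plan is to reduce Proposition~\ref{prop:strongly} to the already-established Proposition~\ref{prop:proto_strongly} by identifying condition $(2)$ here with condition $(3)$ (equivalently $(\mathrm{C*})$) there, and then to match condition $(2)$ of Proposition~\ref{prop:proto_strongly} with the strong protomodularity axiom. Since $\cC$ is semi-abelian, it is in particular homological with cokernels, so Proposition~\ref{prop:proto_strongly} applies verbatim for every object $A$, giving the equivalence of ``every $\K_A$-kernel is a kernel map in $\Pt_A(\cC)$'', ``$\K_A$ reflects kernel maps'', and condition $(\mathrm{C*})$. Thus statement $(2)$ of the present proposition is, object by object in $A$, exactly statement $(2)$ of Proposition~\ref{prop:proto_strongly}. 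It therefore remains only to show that ``$\K_A$ reflects kernel maps for every $A$'' is equivalent to strong semi-abelianness.

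The key step is the translation of the strong protomodularity axiom into the language of kernel maps. By definition (Section~\ref{subsec:strong_protomodularity}), $\cC$ is strongly protomodular precisely when every change of base functor is normal, and in the pointed case it suffices that each kernel functor $\K_A$ be conservative and reflect normal monomorphisms. Conservativity of $\K_A$ is automatic here, being equivalent to protomodularity, which is part of the semi-abelian hypothesis. So strong semi-abelianness reduces to the statement that each $\K_A$ reflects normal monomorphisms. The point is now to upgrade ``reflects normal monomorphisms'' to ``reflects kernel maps.'' This is where Proposition~\ref{prop:N_eq_K} enters: since $\cC$ is semi-abelian, hence Barr exact and homological with cokernels, the class $\mathcal{N}$ of normal monomorphisms coincides with the class $\mathcal{K}$ of kernels. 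As was already observed at the end of Section~\ref{subsec:strong_protomodularity}, in the Barr-exact case one may replace ``normal morphisms'' by ``kernel maps'' throughout Bourn's criterion. Consequently, $\K_A$ reflects normal monomorphisms if and only if it reflects kernel maps.

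Assembling these identifications yields the equivalence: $\cC$ strongly semi-abelian $\Leftrightarrow$ each $\K_A$ reflects normal monomorphisms $\Leftrightarrow$ each $\K_A$ reflects kernel maps $\Leftrightarrow$ condition $(2)$ of Proposition~\ref{prop:proto_strongly} holds for every $A$ $\Leftrightarrow$ condition $(\mathrm{C*})$ holds for every $A$, which is precisely statement $(2)$ of the present proposition. I expect the main obstacle to be a bookkeeping one rather than a conceptual one: namely, making sure that ``reflecting normal monomorphisms'' in Bourn's sense aligns cleanly with ``reflecting kernel maps'' as used in Proposition~\ref{prop:proto_strongly}, and in particular that the normality of $\varphi$ in $\Pt_A(\cC)$ and the normality of $\varphi\cdot k_b$ in $\cC$ correspond as in the criterion of Section~\ref{subsec:strong_protomodularity}. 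Once Proposition~\ref{prop:N_eq_K} is invoked to collapse $\mathcal{N}$ onto $\mathcal{K}$, this correspondence becomes routine, and the proof is essentially the chain of equivalences above.
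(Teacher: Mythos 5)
Your proposal is correct and follows exactly the route the paper intends: the paper states this proposition without a separate proof, justifying it by the preceding remark that in the Barr-exact case condition $(2)$ of Proposition~\ref{prop:proto_strongly} (reflection of kernel maps) \emph{is} the strong protomodularity axiom, via Proposition~\ref{prop:N_eq_K} collapsing normal monomorphisms onto kernels and Bourn's criterion from Section~\ref{subsec:strong_protomodularity}. Your chain of equivalences, including the observation that conservativity of $\K_A$ is automatic from protomodularity, is precisely this argument made explicit.
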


\section{Change of base: the other direction}\label{sec:pf_change_of_base}

As we have recalled in Section \ref{sec:preliminaries},  for any map $f\colon E\to A$,
the change of base functor $f^*\colon \Pt_A(\cC) \to \Pt_E(\cC)$, is defined by pulling
back along $f$. In other terms, $f$ defines a functor between the fibers that moves
\emph{backward}, with respect to the direction of $f$.
A quite natural question to ask is whether there  are conditions allowing to
\emph{push forward} along a map. More precisely, given a map $q\colon A\to Q$, we aim
to define a functor $q_*\colon  \Pt_A(\cC) \to \Pt_Q(\cC)$.

In the present work, we restrict our attention to the case when $q$ is a regular
epimorphism. The following result shows that such a push forward can be performed
if, and only if, the pullback along $k=\ker(q)$ trivializes the pointed object we started with:

\begin{Proposition}\label{prop:trivial_pointed_pass_to_Q}
In a strongly semi-abelian category $\cC$, we consider a pointed object $(C,c,s_c, A)$, and a
regular epimorphism $q\colon A\to Q$. Then, if we denote by $(K,k)$ and $(Y,k_c)$ the kernels of $q$ and of $c$
respectively, the following statements are equivalent:
\begin{itemize}
\item[$(1)$] the pullback along $k$ of   $(C,c,s_c, A)$ is the pointed object
$$
(Y\times K, \pi_2, \langle0,1\rangle, K)\,,
$$
\item[$(2)$] there exist a pointed object  $(D,d,s_d, Q)$  and a cartesian morphism
$$
(\gamma,q)\colon (C,c,s_c, A)\to (D,d,s_d, Q)\,.
$$
\end{itemize}
\end{Proposition}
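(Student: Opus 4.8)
The plan is to prove the two implications separately, the direction $(2)\Rightarrow(1)$ being essentially formal and $(1)\Rightarrow(2)$ carrying all the weight. Throughout I read condition $(1)$ as the assertion that the restricted point $k^{*}(C,c,s_c,A)$ is \emph{trivial}, i.e.\ that $K$ acts trivially on $Y=\mathrm{Ker}(c)$ (cf.\ the first Example of Section \ref{subsec:internal actions}); and I use that, $\cC$ being semi-abelian, normal monomorphisms coincide with kernel maps (Proposition \ref{prop:N_eq_K}).

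For $(2)\Rightarrow(1)$ I would argue by pseudofunctoriality of the fibration of points. Assuming a cartesian $(\gamma,q)$, the point $(C,c,s_c,A)$ is $q^{*}(D,d,s_d,Q)$, whence $k^{*}(C,c,s_c,A)\cong k^{*}q^{*}(D,d,s_d,Q)\cong (q\cdot k)^{*}(D,d,s_d,Q)$. Since $k=\mathrm{ker}(q)$, the composite $q\cdot k$ is the zero morphism $K\to Q$, factoring through $0$; as $\cC$ is pointed, $0$ is terminal, so base change along such a map returns the trivial point $(W\times K,\pi_2,\langle0,1\rangle,K)$ with $W=\K_Q(D,d,s_d)$. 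Finally $W=\mathrm{Ker}(d)=\K_A\bigl(q^{*}(D,d,s_d)\bigr)=\K_A(C,c,s_c)=Y$, because change of base is left exact and $\K_A\circ q^{*}\cong\K_Q$. This is exactly $(1)$.

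For $(1)\Rightarrow(2)$ the strategy is to realise $(D,d,s_d,Q)$ as a quotient of $(C,c,s_c)$, the candidate being $\gamma=\mathrm{coker}(\iota)$ with $\iota=s_c\cdot k\colon K\to C$. Once $\iota$ is known to be a kernel map, the rest is routine: from $q\cdot c\cdot\iota=q\cdot k=0$ one gets a unique $d\colon D\to Q$ with $d\cdot\gamma=q\cdot c$; dually $\gamma\cdot s_c$ kills $k$, and since $q=\mathrm{coker}(k)$ in the semi-abelian setting, $\gamma\cdot s_c$ factors uniquely as $s_d\cdot q$, giving a section of $d$ (that $d\cdot s_d=1_Q$ follows since $q$ is epic). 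These two factorisations say precisely that $(\gamma,q)$ is a morphism of points. To see it is cartesian I would invoke protomodularity: the comparison $(C,c,s_c)\to q^{*}(D,d,s_d)$ in $\Pt_A(\cC)$ is an isomorphism iff its image under $\K_A$ is, so it suffices that $\gamma$ restrict to an isomorphism $Y=\mathrm{Ker}(c)\to\mathrm{Ker}(d)$. This I obtain from the $3\times3$ lemma applied to the diagram whose columns are the short exact sequences $0\to Y\xrightarrow{=}Y$, $\ K\xrightarrow{\iota}C\xrightarrow{\gamma}D$, $\ K\xrightarrow{k}A\xrightarrow{q}Q$: exactness of the upper two rows $0\to K\xrightarrow{=}K$ and $Y\xrightarrow{k_c}C\xrightarrow{c}A$ together with all three columns forces the bottom row $Y\xrightarrow{\gamma\cdot k_c}D\xrightarrow{d}Q$ to be short exact, whence $\mathrm{Ker}(d)\cong Y$ via $\gamma\cdot k_c$.

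The main obstacle, and the only place where strong protomodularity is used, is the claim that $\iota=s_c\cdot k$ is a normal monomorphism in $\cC$. Condition $(1)$ is essential here: normality of $\iota$ \emph{fails} for a general normal subobject $K\trianglelefteq A$ (normality is not transitive, even in the strongly semi-abelian category of groups), and what makes $s_c(K)$ normal in $C$ is precisely that $K$ acts trivially on $Y$ while being normal in $A$ --- in group-theoretic terms, $Y$ centralises $s_c(K)$ and $s_c(A)$ normalises it, so every conjugate of $s_c(K)$ lands back in $s_c(K)$. Categorically, condition $(1)$ identifies $\iota$ with the composite $K\xrightarrow{\langle0,1\rangle}Y\times K\cong\mathrm{Ker}(q\cdot c)\xrightarrow{p_1}C$ of the kernel $\langle0,1\rangle$ (of $\pi_1$) with the normal subobject $p_1=\mathrm{ker}(q\cdot c)$, and I would deduce normality of $\iota$ from the strong protomodularity criterion of Section \ref{subsec:strong_protomodularity} (equivalently, from condition $(\mathrm{C*})$ of Proposition \ref{prop:strongly}), condition $(1)$ supplying exactly the triviality that turns the relevant restriction-to-kernel into a kernel map. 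I expect the careful verification of this single step to be the technical core of the proof.
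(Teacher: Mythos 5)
Your skeleton coincides with the paper's own proof: for $(2)\Rightarrow(1)$ you paste pullbacks along $q$ and $k$ and use $q\cdot k=0$ (the paper does exactly this), and for $(1)\Rightarrow(2)$ you build $D=\mathrm{coker}(s_c\cdot k)$, obtain $d$ and $s_d$ from universal properties, get $\mathrm{Ker}(d)\cong Y$ from the $3\times 3$ lemma, and conclude cartesianness from protomodularity (kernel functors reflect isomorphisms). All of those steps are correctly carried out. The problem is that the one step you defer --- normality of $\iota=s_c\cdot k$ --- is precisely the step that carries the whole proposition, and your proposed way of obtaining it does not work as stated.

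You say you would deduce normality of $\iota$ ``from the strong protomodularity criterion of Section \ref{subsec:strong_protomodularity} (equivalently, from condition $(\mathrm{C*})$ of Proposition \ref{prop:strongly})''. But Bourn's criterion recalled in Section \ref{subsec:strong_protomodularity} applies to a morphism of \emph{split} short exact sequences over a \emph{common} base, and condition $(\mathrm{C*})$ applies to a morphism lying in a single fiber $\Pt_A(\cC)$. The configuration you have correctly identified is neither: $\iota$ is the composite $\varphi\cdot\langle 0,1\rangle$ where $\varphi=\mathrm{ker}(q\cdot c)$ is the kernel of the \emph{non-split} regular epimorphism $q\cdot c\colon C\to Q$ (a splitting of $q\cdot c$ over $Q$ is essentially what the proposition is trying to produce), and the only available morphism of points, $(\varphi,k)\colon (Y\times K,\pi_2,\langle0,1\rangle,K)\to(C,c,s_c,A)$, lies over $k$, not over an identity. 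Note also that what is needed is not ``a kernel composed with a kernel is a kernel'' --- that is false, as you yourself observe --- but the statement that in the morphism of (non-split) short exact sequences $(\langle0,1\rangle,\,s_c,\,1_Q)\colon (K\to A\to Q)\to(Y\times K\to C\to Q)$, normality of the left-hand map $\langle0,1\rangle$ forces normality of $\varphi\cdot\langle0,1\rangle=s_c\cdot k$. This is exactly Axiom M1.2 of Rodelo \cite{Ro04}, the non-split analogue of Bourn's criterion, and the paper closes the gap by citing the (nontrivial) fact that it holds in every strongly semi-abelian category. If you wanted to avoid that citation, you would have to prove the split-to-non-split passage yourself, e.g.\ by pulling the situation back along $q\cdot c$ to the kernel pair $C\times_Q C\rightrightarrows C$, where everything does split: the map $s_c\times_Q 1_C\colon A\times_Q C\to C\times_Q C$ is a morphism in $\Pt_C(\cC)$ whose restriction to kernels is $\langle0,1\rangle$, so Bourn's split criterion makes $\langle s_c\cdot k,0\rangle$ normal in $C\times_Q C$; one then pushes this subobject down along the split epimorphism $\pi_1$, using that in a semi-abelian category the regular image of a normal monomorphism along a regular epimorphism is normal. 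That bridging argument (or the appeal to \cite{Ro04}) is the actual technical core, and it is absent from your proposal.
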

The situation is described by the following diagram.
\begin{equation}\label{diag:q_*}
\begin{aligned}
\xymatrix{
Y\ar[r]^(.4){k_{\pi_2}}\ar@{=}[d]
&Y\times K\ar@{}[dr]|{(\lozenge)}
\ar@<+.5ex>[r]^(.6){\pi_2}\ar[d]_{\varphi}
&K\ar@<+.5ex>[l]^(.4){\langle0,1\rangle}\ar[d]^{k}
\\
Y\ar[r]_{k_c}\ar@{=}[d]
&C\ar@{}[dr]|{(\lozenge\lozenge)}
\ar@<+.5ex>[r]^{c}\ar@{-->}[d]_{\gamma}
&A\ar@<+.5ex>[l]^{s_c}\ar[d]^q
\\
Y\ar@{-->}[r]_{k_d}
&D\ar@{-->}@<+.5ex>[r]^{d}
&Q\ar@{-->}@<+.5ex>[l]^{s_d}
}
\end{aligned}
\end{equation}

\begin{proof}
$(1)\Rightarrow(2)$. By the assumption in $(1)$,  $\varphi$ is a kernel, since it is the pullback of a kernel.
Now, if we focus on the square $(\lozenge)$ above, we can consider the kernels of the horizontal
split epimorphisms and the cokernels of the vertical monomorphisms. Since the base category is homological,
not only can we say that such  kernels are isomorphic, but also the cokernels of $k$ and $\varphi$ are. This last
claim is proved by applying the $3\times 3$ lemma to the diagram
$$
\xymatrix{
Y\ar@{=}[d]\ar[r]^-{\langle1,0\rangle}
&Y\times K
\ar@<+.5ex>[r]^-{\pi_2}\ar[d]_{\varphi}
&K\ar@<+.5ex>[l]^{\langle0,1\rangle}\ar[d]^k
\\
Y\ar[d]\ar[r]^{k_c}
&
C\ar[d]_-{q\cdot c}
\ar@<+.5ex>[r]^{c}
&A\ar@<+.5ex>[l]^{s_c}\ar[d]^{q}
\\
0\ar[r]
&Q\ar@{=}[r]
&Q}
$$
Now, let us consider the following morphism of short exact sequences:
$$
\xymatrix{
K\ar[r]^k\ar[d]_{\langle0,1\rangle}
&A\ar[r]^q\ar[d]^{s_c}
&Q\ar@{=}[d]
\\
Y\times K\ar[r]_{\varphi}
&C \ar[r]_{q\cdot c}
&Q
}
$$
Since $\langle0,1\rangle$ is a kernel, applying Axiom M1.2 of \cite{Ro04} (which holds in every strongly semi-abelian
category) we deduce that also $s_c\cdot k=\varphi\cdot\langle0,1\rangle$ is a kernel. Let us compute the cokernel
 $\gamma=\mathrm{coker}(s_c\cdot k)$, and arrange our data in the diagram below:
$$
\xymatrix{
K\ar[r]^-{\langle0,1\rangle}\ar@{=}[d]
&Y\times K \ar[d]_{\varphi}\ar[r]^-{\pi_1}
&Y\ar[d]^{\alpha}
\\
K\ar[d]\ar[r]_{s_c \cdot k}
&C\ar[r]_{\gamma}\ar[d]^{q\cdot c}
&D\ar[d]^{\beta}
\\
0\ar[r]
&Q\ar@{=}[r]
&Q
}
$$
where $\alpha$ and $\beta$ are  obtained by the universal property of the cokernels
involved.
By the $3\times 3$ lemma, we deduce that the sequence $(\alpha,\beta)$  is short exact.
Finally, the square $\beta \cdot \gamma = q\cdot c$ is a pullback, since
$\mathrm{Ker}(\beta)=\mathrm{Ker}(c)$:
$$
\xymatrix{
&K\ar[d]_{s_c\cdot k}\ar@{=}[r]
&K\ar[d]^k
\\
Y\ar[r]^{k_c}\ar@{=}[d]
&C
\ar@<+.5ex>[r]^{c}\ar[d]^{\gamma}
&A\ar@<+.5ex>[l]^{s_c}\ar[d]^{q}
\\
Y\ar[r]_{\alpha}
&D
\ar[r]_{\beta}
&Q
}
$$
Then also $\mathrm{Ker}(\gamma)=\mathrm{Ker}(q)$.
Moreover $\beta$ is a split epimorphism. In order to prove this assertion, we notice that
 $(\gamma\cdot s_c)\cdot k=\gamma\cdot (s_c\cdot k)=0$, and
 by the universal property of the cokernel $q$, there exists a (unique) map $\sigma\colon Q\to D$
such that $\sigma \cdot q= \gamma\cdot s_c$.
Hence $\beta\cdot\sigma \cdot q=\beta \cdot \gamma\cdot s_c= q \cdot c\cdot s_c=q$, and, since $q$ is epic, we
get $\beta\cdot\sigma =1_Q$.

$(2)\Rightarrow(1)$. Assume we are in the situation as described by the diagram below
$$
\xymatrix{
B\ar@{}[dr]|{(i)}
\ar@<+.5ex>[r]^{b}\ar[d]_{\varphi}
&K\ar@<+.5ex>[l]^{s_b}\ar[d]^{k}
\\
C\ar@{}[dr]|{(ii)}
\ar@<+.5ex>[r]^{c}\ar[d]_{\gamma}
&A\ar@<+.5ex>[l]^{s_c}\ar[d]^q
\\
D\ar@<+.5ex>[r]^{d}
&Q\ar@<+.5ex>[l]^{s_d}
}
$$
with $(i)$ and $(ii)$ pullbacks, and $(k,q)$ short exact. Let us denote by $Y$ the kernel of $c$. Then $(i) + (ii)$ is a pullback, and since $q\cdot k $ factors
through $0$, the pointed object $(B,b,s_b,K)$ is isomorphic to the product projection $(Y\times K,\pi_2,\langle0,1\rangle,K)$.
\end{proof}

For a map $k$ with codomain $A$, we denote by $\Pt_A(\cC)|_k$ the full subcategory of $\Pt_A(\cC)$, with objects
 those split epimorphisms such that the change of base along $k$ gives a product projection.

Then it is easy to prove that Proposition \ref{prop:trivial_pointed_pass_to_Q} above can be used in order to
 give a description of the change of base $q^*$ when $q$ is a regular epimorphism.
\begin{Corollary}
Given a regular epimorphism $q\colon A\to Q$ together with its kernel $k=\ker(q)$ in a strongly semi-abelian category $\cC$,  we have a factorization $q^*=j\cdot e$
$$
\xymatrix{\Pt_Q(\cC)\ar@/^4ex/[rr]^{p^*} \ar[r]_{e}&\Pt_A(\cC)|_k  \ar[r]_{j} &\Pt_A(\cC)}
$$
where the functor $e$ is an equivalence of categories.
\end{Corollary}
\begin{proof}
For $\cC$ sequentiable, it is trivial to show that the change of base along a regular epimorphism $q$ is fully faithful.
Moreover, in the strongly semi-abelian case,point (2) of  Proposition \ref{prop:trivial_pointed_pass_to_Q} defines precisely a quasi-inverse
for the equivalence
$$
\Pt_Q(\cC)\to \Pt_A(\cC)|_k\,.
$$
\end{proof}

\section{Back to action(s)}\label{sec:back to actions}

In this section we  return to the problems described in the introduction, now set in
the semi-abelian context.

\subsection{Internal actions}\label{subsec:internal actions}
Semi-abelian categories are a convenient setting for working with
\emph{internal actions}. Here we briefly recall their definition from \cite{BJK}.
This will help in formulating internally the property (KC) of the introduction.

Let $\cC$ be a finitely complete, pointed category with  pushouts of split monomorphisms.
Then, for every object $A$ of $\cC$, the functor $\K_A$ has a left adjoint $\Sigma_A$. This
can be described as follows: for an object $X$ of $\cC$,
$\Sigma_A(X)$ is the pointed object $\xymatrix{A+X\ar@<+.5ex>[r]^(.6){[1,0]}&A\ar@<+.5ex>[l]^(.4){i_A}}$.
The monad corresponding to this adjunction is denoted by
$A\flat(-)$, and for any object $X$ of $\cC$ one gets a kernel diagram:
$$
\xymatrix{A\flat X\ar[r]^{\kappa_{A,X}}&A+X\ar[r]^(.6){[1,0]}&A}\,.
$$
The $A\flat(-)$-algebras are called internal $A$-actions (see \cite{BJK,BJ98}). The category  $\mathbf{Alg}(A\flat (-))$
 of such algebras
will be more conveniently denoted by $\mathbf{Act}(A,-)$.

When the kernel functor $\K_A$ is monadic, then $\cC$ is said to be a category with semi-direct products, and
the canonical comparison
\begin{equation}\label{eq:Pr=Act}
\Xi\colon\Pt_A(\cC) \to \mathbf{Act}(A,-)\,,
\end{equation}
 establishes an equivalence of categories. All semi-abelian categories satisfy this condition.

\begin{Example}\label{ex:group_actions}{\em
In the case of group one easily recovers the classical notion of group action.  For two given
groups $A$ and $X$, $A\flat X$ is noting but the subgroup of the free product $A*X$ generated by the words
$a;x;-a$, with $a\in A$ and $x\in X$, and the homomorphism $\xi\colon A\flat X\to X$ recovers a classical group action by
letting $a\cdot x= \xi(a;x;-a)$.
}\end{Example}

For an action $\xi\colon A\flat X\to X$, the semi-direct product of $X$ with $A$, with action $\xi$ is
 the split epimorphism corresponding to $\xi$ via $\Xi$.
It can be computed explicitly (see \cite{MM10b}) by means of the coequalizer diagram:
$$
\xymatrix{A\flat X\ar@<+.5ex>[r]^{\kappa_{A,X}}\ar@<-.5ex>[r]_{i_X\cdot\xi}&A+X\ar[r]^{q_\xi}&A\rtimes_{\xi}X\,.}
$$

\begin{Example}
{\em For objects $A$ and $X$, the \emph{trivial action} of $A$ on $X$ is  the composite
$$
\rho_{A,X}=\rho_X\colon\xymatrix{A\flat X\ar[r]^{\kappa_{A,X}}&A+X\ar[r]^(.6){[0,1]}&X}.
$$
The map $\rho_{A,X}$ is natural in the two variables $A$ and $X$.
The corresponding split epimorphism is given by the cartesian product with the canonical section:
 $$\xymatrix{X\times A\ar@<+.5ex>[r]^(.65){\pi_2}&A\ar@<+.5ex>[l]^(.35){\langle0,1\rangle}}\,.$$
}\end{Example}
\begin{Example}
{\em Every object  $X$ acts on itself by conjugation.  This is given by the composite
$$
\chi_X\colon\xymatrix{X\flat X\ar[r]^{\kappa_{X,X}}&X+X\ar[r]^(.6){[1,1]}&X}.
$$
The map $\chi_X$ is natural in the variable $X$.
The corresponding split epimorphism is isomorphic to the cartesian product with the diagonal section:
 $$\xymatrix{X\times X\ar@<+.5ex>[r]^(.65){\pi_2}&X\ar@<+.5ex>[l]^(.35){\langle1,1\rangle}}\,.$$
}\end{Example}

\subsection{Property (KC) for split epimorphisms}\label{subsec:property KC for split epimorphisms}
From now on, we consider $\cC$ semi-abelian.
In  this setting, we will first  formulate our property (KC) in terms of split epimorphisms,
according to the equivalence (\ref{eq:Pr=Act}) between actions and points.
Then, in the next section, we will go back to the original formulation of the problem.

Let a short exact sequence $\xymatrix{X\ar[r]^f&Y\ar[r]^g&Z}$ be given, and consider a split  epimorphism $(C,c,s_c)$,
  with kernel $Y$ and codomain $A$. Let $\xi$ be the corresponding action.
\begin{equation}\label{diag:K*C*2}
\begin{aligned}
\xymatrix{
X\ar@{-->}[r]^{k_b}\ar[d]_{f}
&B\ar@{}[dr]|{(\dag)}
\ar@{-->}@<+.5ex>[r]^{b}\ar@{-->}[d]_{\varphi}
&A\ar@{-->}@<+.5ex>[l]^{s_b}\ar@{==}[d]
\\
Y\ar[r]_{k_c}\ar[d]_{g}
&C\ar@{}[dr]|{(\ddag)}
\ar@<+.5ex>[r]^{c}\ar@{-->}[d]_{\gamma}
&A\ar@<+.5ex>[l]^{s_c}\ar@{==}[d]
\\
Z\ar@{-->}[r]_{k_d}
&D\ar@{-->}@<+.5ex>[r]^{d}
&A\ar@{-->}@<+.5ex>[l]^{s_d}
}
\end{aligned}
\end{equation}
With reference to the diagram above, the fact that the action $\xi$ restricts to the kernel $X$, amounts to the fact that there
exists a morphism $\varphi$ of split epimorphisms $(\dag)$, that restricts to $f$, while
the fact that the action $\xi$ passes to the quotient $ Z=Y/X$  amounts to the fact that there
exists a morphism $\gamma$ of split epimorphisms $(\ddag)$, that restricts to $g$.
In this fashion, with a little abuse of language, we can translate property (KC) in the double implication
$(\dag)\Leftrightarrow(\ddag)$.

\begin{Remark}{\em
Indeed, the implication $(\ddag)\Rightarrow(\dag)$ holds in any pointed category with finite limits, with no assumption on $g$.
In other words, this is the trivial part of our  problem, and it has nothing to do with actions, etc.

 On the other hand, the implication $(\dag)\Rightarrow(\ddag)$ translates precisely the condition $(C*)$ stated in
Proposition \ref{prop:proto_strongly}.
}\end{Remark}

\subsection{Actions on quotients}\label{subsec:action_on_quotients}
We are ready to return to our initial problem, and to formulate it in terms of internal actions.

\medskip
In a pointed regular category with semi-direct products, we consider
an  $A$-action $\xi$ on $Y$  and a short exact sequence, $(f,g)$.
\begin{equation}\label{diag:KC}
\begin{aligned}
\xymatrix{
A\flat X\ar[r]^{1\flat f}\ar@{-->}[d]_{\xi_{|}}\ar@{}[dr]|{(\dag)}
&A\flat Y\ar[r]^{1\flat g}\ar[d]_{\xi}\ar@{}[dr]|{(\ddag)}
&A\flat Z\ar@{-->}[d]_{\overline{\xi}}
\\
X\ar[r]_{f}
&Y\ar[r]_{g}
&Z
}
\end{aligned}
\end{equation}
We can state the implications above using internal actions, as follows:

\medskip

\begin{description}
\item[$(\ddag)\Rightarrow(\dag)$]
If $\xi$ induces an action on the quotient $Z$, then it restricts to the kernel $X$. In other words, if there exists an action $\overline \xi$ such that the square on the right commutes,
then there exists an action $\xi_{|}$ such that the square on the left commutes.
\item[$(\dag)\Rightarrow(\ddag)$]
If $\xi$ restricts to the kernel $X$,  then it induces an action on the quotient $Z$. In other words, if there exists an action $\xi_{|}$ such that the square on the left commutes,
then there exists an action $\overline\xi$ such that the square on the right commutes.
\end{description}

Of course, the implication $(\ddag)\Rightarrow(\dag)$ does hold in any pointed category with semi-direct products.
For what concerns property $(\dag)\Rightarrow(\ddag)$, we can translate Proposition \ref{prop:C*} accordingly,
in terms of internal actions. This is summarized  in the following Theorem, that can be derived directly from
Proposition \ref{prop:C*}.
\begin{Theorem}\label{thm}
Let $\cC$ be a semi-abelian category. The following statements are equivalent:
\begin{itemize}
\item[$(1)$] $\cC$ is strongly protomodular,
\item[$(2)$]  $(\dag)\Rightarrow(\ddag)$,
i.e.\ for any $A$-action $\xi$ on an object $Y$, and for any normal subobject $X$ of $Y$ such that
$\xi$ restricts $X$,   $\xi$ induces an action on the quotient $Y/X$.
\end{itemize}
\end{Theorem}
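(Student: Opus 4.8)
The plan is to read the statement as the transcription of Proposition \ref{prop:C*} across the action--point equivalence, so that essentially all of the categorical content is already available and only a dictionary has to be set up. First I would dispose of item $(1)$: since $\cC$ is assumed semi-abelian, ``strongly protomodular'' and ``strongly semi-abelian'' are synonymous (Section \ref{subsec:strong_protomodularity}), so item $(1)$ of the Theorem is verbatim item $(1)$ of Proposition \ref{prop:C*}. It then remains to identify item $(2)$ with condition $(C*)$ of that proposition, uniformly in $A$.

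For this identification I would fix $A$ and use the equivalence $\Xi\colon\Pt_A(\cC)\to\mathbf{Act}(A,-)$ of (\ref{eq:Pr=Act}), available because semi-abelian categories have semi-direct products, under which the underlying-object functor of actions corresponds to $\K_A$. An $A$-action $\xi$ on $Y$ then becomes a point $(C,c,s_c)$ over $A$ whose kernel is $Y$. Because $\cC$ is Barr exact, Proposition \ref{prop:N_eq_K} lets me identify a normal subobject $X$ of $Y$ with a kernel map $f\colon X\to Y$, and, as read off diagram (\ref{diag:K*C*2}), the hypothesis that $\xi$ restricts to $X$ is exactly the existence of a morphism $\varphi$ of $\Pt_A(\cC)$ with $\K_A(\varphi)=f$ --- that is, the datum of a $\K_A$-kernel $\varphi$; conversely every $\K_A$-kernel arises this way. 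Writing $g=\mathrm{coker}(f)$ and $Z=Y/X$, the conclusion that $\xi$ passes to $Z$ becomes the existence of an induced action $\overline{\xi}$ making the right-hand square of (\ref{diag:KC}) commute, equivalently a morphism $\gamma$ of $\Pt_A(\cC)$ with $\K_A(\gamma)=g=\mathrm{coker}(f)$.

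The step I expect to cost the most care, and hence the main obstacle, is matching this last clause with the precise demand of $(C*)$, namely that the induced $\gamma$ be the cokernel $\mathrm{coker}(\varphi)$ in $\Pt_A(\cC)$. In one direction this is immediate: $(C*)$ produces $\gamma=\mathrm{coker}(\varphi)$ with $\K_A(\gamma)=\mathrm{coker}(f)=g$, and $\Xi^{-1}$ turns it into the desired $\overline{\xi}$. In the other direction I would start from any $\gamma$ coming from an induced action and note $\K_A(\gamma\cdot\varphi)=g\cdot f=0$; testing against the jointly epic pair $(k_b,s_b)$ upgrades this to $\gamma\cdot\varphi=0$ in $\Pt_A(\cC)$, so $\gamma$ factors through $\gamma_0=\mathrm{coker}(\varphi)$, which exists since the fibers are semi-abelian. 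Applying $\K_A$ and using that it preserves regular epimorphisms in the homological setting, together with $g=\mathrm{coker}(f)$, one finds that $\K_A(\gamma_0)$ agrees with $g$ up to isomorphism, which is exactly the content of $(C*)$. Once item $(2)$ is recognized as $(C*)$ holding for every $A$, the equivalence $(1)\Leftrightarrow(2)$ is read directly off Proposition \ref{prop:C*}.
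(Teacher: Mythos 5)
Your proof is correct and takes essentially the same route as the paper: Theorem \ref{thm} is obtained there, just as you propose, by translating statement $(2)$ through the actions/points dictionary of Section \ref{subsec:property KC for split epimorphisms} into condition $(\mathrm{C*})$ and then quoting Proposition \ref{prop:C*}. The only difference is one of completeness: where the paper's remark simply asserts that $(\dag)\Rightarrow(\ddag)$ ``is precisely'' condition $(\mathrm{C*})$, your final paragraph supplies the (correct) factorization argument showing that the existence of \emph{some} $\gamma$ restricting to $g$ already forces $\mathrm{coker}(\varphi)$ to restrict to $\mathrm{coker}(f)$ up to isomorphism, so your write-up is, if anything, more detailed than the original.
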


\subsection{Action of quotients}\label{subsec:action_of_quotients}
So far we discussed the conditions under which an action on a given object extends to a quotient of that object. Now
we change our point of view: we fix the \emph{acted} object, and we consider when an action \emph{of} a given object,
induces an action of a quotient of that object.

More precisely, let a short exact sequence
$$
\xymatrix{K\ar[r]^{k}&A\ar[r]^{q}&Q}
$$
be given, and let us consider an action $\xi\colon A\flat Y\to Y$. We pose the following question: when does the action
$\xi$ induce an action $Q\flat Y\to Y$?

The answer, in the strongly semi-abelian context, involves the restriction to the kernel $K$: likewise in the case of groups,
 $\xi$ induces an action $q_*(\xi)$ of the quotient $Q$, precisely when $\xi\cdot(k\flat1)$ is trivial.

\begin{Proposition}\label{prop:act_of_quotients}
Let $\cC$ be strongly semi-abelian, $(k,q)$ a short exact sequence and $\xi$ an action, as above. Then
the following conditions are equivalent:
\begin{itemize}
\item[$(1)$] $\xi\cdot(k\flat1)=\rho_K$, i.e.\ the trivial action on $K$,
\item[$(2)$] there exists an action $q_*(\xi)\colon Q\flat Y\to Y$ such that $\xi=q_*(\xi)\cdot(q\flat1)$.
\end{itemize}
\end{Proposition}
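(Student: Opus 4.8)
The plan is to derive this statement as the translation of Proposition \ref{prop:trivial_pointed_pass_to_Q} through the equivalence (\ref{eq:Pr=Act}) between points and actions. The crucial ingredient is the \emph{compatibility} of the comparison functors $\Xi$ with change of base: for any morphism $h\colon B\to A$, restricting an $A$-action $\zeta\colon A\flat Y\to Y$ along $h$ --- that is, precomposing with the canonical map $h\flat 1\colon B\flat Y\to A\flat Y$ --- corresponds, under $\Xi$, to the change of base functor $h^*\colon \Pt_A(\cC)\to\Pt_B(\cC)$. First I would make this precise by exhibiting the commuting square of functors relating the comparisons over $A$ and over $B$, the change of base $h^*$, and precomposition with $h\flat 1$; this is where the bulk of the verification lies.

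Granting this, I would match the two conditions of the present statement with those of Proposition \ref{prop:trivial_pointed_pass_to_Q}, applied to the point $(C,c,s_c,A)$ corresponding to $\xi$ under $\Xi$. For condition $(1)$: specializing the compatibility to $h=k$, the restriction $\xi\cdot(k\flat 1)$ is the $K$-action corresponding to $k^*(C,c,s_c,A)$; and, as recalled in Section \ref{subsec:internal actions}, the trivial action $\rho_K$ corresponds precisely to the product projection $(Y\times K,\pi_2,\langle 0,1\rangle,K)$. Hence $\xi\cdot(k\flat 1)=\rho_K$ is equivalent to condition $(1)$ of Proposition \ref{prop:trivial_pointed_pass_to_Q}.

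For condition $(2)$: specializing the compatibility to $h=q$, an action $q_*(\xi)\colon Q\flat Y\to Y$ satisfying $\xi=q_*(\xi)\cdot(q\flat 1)$ means exactly that the point $(C,c,s_c,A)$ corresponding to $\xi$ is the change of base along $q$ of the point $(D,d,s_d,Q)$ corresponding to $q_*(\xi)$. Since cartesian morphisms of the fibration of points are precisely those sitting over a pullback square, this is the same as the existence of a cartesian morphism $(\gamma,q)\colon (C,c,s_c,A)\to(D,d,s_d,Q)$, which is condition $(2)$ of Proposition \ref{prop:trivial_pointed_pass_to_Q}. The equivalence $(1)\Leftrightarrow(2)$ of that proposition then yields the claim.

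The main obstacle I anticipate is the first step: establishing that restriction of actions along a morphism is compatible with the change of base functor under $\Xi$, including checking that $\zeta\cdot(h\flat 1)$ is genuinely a $B$-action (i.e.\ that $h\flat 1$ underlies a morphism of the monads $B\flat(-)\to A\flat(-)$) and that the comparison is natural in the acting object. Once this functorial bookkeeping is in place, the equivalence follows formally, with no further categorical input beyond Proposition \ref{prop:trivial_pointed_pass_to_Q}.
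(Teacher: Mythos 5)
Your proposal is correct and takes essentially the same route as the paper: the paper's entire proof is the one-line remark that the statement is Proposition \ref{prop:trivial_pointed_pass_to_Q} reformulated through the equivalence (\ref{eq:Pr=Act}), and your matching of condition $(1)$ with the pullback along $k$ being a product projection, and of condition $(2)$ with the existence of a cartesian morphism over $q$, is exactly the translation the paper leaves implicit. The compatibility of $\Xi$ with change of base (restriction along $h\flat 1$ corresponding to $h^*$), which you rightly flag as the only real verification, is precisely the bookkeeping the paper takes for granted.
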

\begin{proof}
This is nothing but the formulation of Proposition \ref{prop:trivial_pointed_pass_to_Q} in terms of internal actions.
\end{proof}

\section*{Further developments}
Theorem \ref{thm}, together with Proposition \ref{prop:act_of_quotients}, seems to suggest that strongly semi-abelian
categories are a convenient setting for developing homological algebra of internal (pre)crossed modules.

Let us recall that strongly semi-abelian varieties include several classical categories of algebras. Indeed, as proved in
\cite{MM10b}, all the  distributive $\Omega_2$-groups, also called categories of  \emph{groups with operations}, are such.
Here we recall the definition for the reader's convenience.

A distributive $\Omega_2$-group is a variety of groups (in the sense of universal algebra) such that:
$\Omega=\Omega_0\bigcup\Omega_1\bigcup\Omega_2$, with
$\Omega_0=\{0\}$, $\Omega_1=\{-\}\bigcup\Omega_1'$ and $\Omega_2=\{+\}\bigcup \Omega_2'$.
where we adopted the additive notation for the (non necessarily commutative) group structure.
These data must satisfy the following axioms
$$
a*(b+c)=a*b+a*c, \qquad (b+c)*a=b*a+c*a,\qquad \mathrm{for\ all\ }*\in \Omega_2';
$$
$$
\omega(a+b)=\omega(a)+\omega(b), \qquad \mathrm{for\ all\ }\omega\in \Omega_1';
$$
$$
\omega(a)*b=\omega(a*b)=a*\omega(b),\qquad \mathrm{for\ all\ }*\in \Omega_2'\mathrm{\ and\ }\omega\in \Omega_1'.
$$
Examples of categories of distributive $\Omega_2$-groups are the categories of groups, rings, Lie algebras, Leibnitz
algebras among others.

Moreover, for all distributive $\Omega_2$-groups, it is possible to  translate conditions involving internal
actions, in conditions involving \emph{external} actions as defined in \cite{Pao}, thus making the theory manageable
in many  algebraic situation of interest.

\section*{Acknowledgements}

I wish to thank Alan Cigoli and Sandra Mantovani,  for their advices throughout the preparation of this paper,
 Zurab Janelidze, Marino Gran and Tim Van der Linden for a useful discussion, the anonymous referee for her/his
 suggestions that have enhanced some aspects of the paper in this final form.

\end{document}